\theoremstyle{plain}
\newtheorem{thm}{Theorem}[section]
\newtheorem{lem}[thm]{Lemma}
\newtheorem{rem}[thm]{Remark}
\newtheorem*{ex*}{Example}
\numberwithin{equation}{section}
\newcommand{\B}{\mathbb{B}}
\newcommand{\N}{\mathbb{N}}
\newcommand{\R}{\mathbb{R}_{+}^{n}}
\def\eps{\varepsilon}
\def\vt{\vartheta}
\def\vr{\varrho}
\def\t{\theta}
\def\1{d \Omega_{\lambda+\mathbf{1}+\eps}(s)}
\def\q{\mathfrak{q}}
\DeclareMathOperator{\supp}{supp}
\begin{document}

\title[Calder\'on-Zygmund operators in the Bessel setting]{Calder\'on-Zygmund operators in the Bessel setting for all possible type indices}

\author[A.J. Castro]{Alejandro J. Castro}
\address{Alejandro J. Castro,     \newline
Departamento de An\'alisis Matem\'atico, Universidad de la Laguna       \newline
Campus de Anchieta, Avda. Astrof\'isico Francisco S\'anchez, s/n,       \newline
38271 La Laguna (Sta. Cruz de Tenerife), Spain
      }
\email{ajcastro@ull.es}

\author[T.Z. Szarek]{Tomasz Z. Szarek}
\address{Tomasz Z. Szarek,     \newline
            Instytut Matematyczny,
      Polska Akademia Nauk, \newline
      \'Sniadeckich 8,
      00--956 Warszawa, Poland
      }
\email{szarektomaszz@gmail.com}

\subjclass[2010]{42C05 (primary), 42B20 (secondary)}
\keywords{Bessel operator, Bessel semigroup, maximal operator, square function, multiplier, Riesz transform, Calder\'on-Zygmund operator}
\thanks{The first author was partially supported by MTM2010/17974 and also by an FPU grant from the Government of Spain}

\begin{abstract}
    In this paper we adapt the technique developed in \cite{NoSz} to show that many harmonic analysis operators in the Bessel setting, including
    maximal operators, Littlewood-Paley-Stein type square functions,
    multipliers of Laplace or Laplace-Stieltjes transform type  and Riesz transforms
    are, or can be viewed as, Calderón-Zygmund operators for all possible values of type parameter $\lambda$
in this context.
This extends the results obtained recently in \cite{BCN}, which are valid only for a restricted range of $\lambda$.
\end{abstract}

\maketitle

\section{Introduction and preliminaries}

Let $n \geq 1$ and $\lambda \in (-1\slash 2,\infty)^n$. We consider the Bessel differential operator
$$\Delta_\lambda = -\Delta - \sum_{i=1}^n \frac{2\lambda_i}{x_i} \partial_{x_i},$$
where $\Delta$ stands for the Euclidean Laplacian in $\mathbb{R}_{+}^n = (0,\infty)^n$.
The operator
$\Delta_\lambda$ is symmetric and nonnegative in $C_c^{\infty}(\R) \subset L^2(\R,d\mu_\lambda)$, where $d\mu_\lambda$ is the doubling measure
given by
$$d\mu_\lambda(x)= \prod_{i=1}^n x_i^{2\lambda_i} dx_i, \qquad x \in \R.$$
It is well known that $\Delta_\lambda$ has a self-adjoint extension, here still denoted by $\Delta_\lambda$, whose spectral decomposition is given via the Hankel transform, see \cite{BCN} for details.

The semigroup $\{W_t^\lambda\}_{t>0}$ generated by $-\Delta_\lambda$
has the integral representation
$$W_t^\lambda f(x) = \int_{\R} W_t^\lambda(x,y)f(y) d\mu_\lambda(y),
     \qquad x \in \R, \ t>0,$$
where the Bessel heat kernel is given by
\begin{equation}\label{exp:Bhk}
W_t^\lambda(x,y)=\frac{1}{(2t)^n} \exp\Big( -\frac{1}{4t}\big(|x|^2+|y|^2\big)\Big)
    \prod_{i=1}^n (x_i y_i)^{-\lambda_i+1\slash 2} I_{\lambda_{i}-1/ 2}
        \Big(\frac{x_i y_i}{2t} \Big), \qquad  x,y \in \R, \ t>0,
\end{equation}
with $I_\nu$ denoting the modified Bessel function of the first kind and order $\nu$, cf. \cite[p.\,395]{Wat}.
Note that
$\{W_t^\lambda\}$ is a symmetric diffusion semigroup in the sense of Stein's monograph (\cite[p.\,65]{St1}).\\

In this setting the $n$-dimensional Hankel transform $h_{\lambda}$ plays the same role as the Fourier transform in the Euclidean context.
It is given by
$$ h_{\lambda}f(x) = \int_{\R} \varphi_{x}^{\lambda}(y) f(y)\, d\mu_{\lambda}(y), \qquad x \in \R,$$
with the kernel
\begin{equation*}
\varphi_{x}^{\lambda}(y)
    = \prod_{i=1}^n (x_i y_i)^{-\lambda_i+1\slash 2} J_{\lambda_i-1\slash 2}(x_i y_i), \qquad x,y \in \R,
\end{equation*}
where $J_\nu$ stands for the Bessel function of the first kind and order $\nu>-1$.

We investigate the following multi-dimensional Bessel operators defined initially either in $L^2(d\mu_{\lambda})$ in the cases of (1)-(4), or in $C^{\lambda}$ (the space of smooth $L^2(d\mu_{\lambda})$-functions whose
Hankel transform $h_{\lambda}$ is also smooth and compactly supported) in the case of Riesz transforms (see \cite[Section 4.4]{BCN}).
\begin{itemize}
\item[$(1)$] The maximal operator
$$
W_{*}^{\lambda}f = \big\| W_t^{\lambda}f\big\|_{L^{\infty}(dt)}.
$$
\item[$(2)$] Littlewood-Paley-Stein type mixed square functions
$$
g^{\lambda}_{m,k}(f)(x) = \big\| \partial_x^m \partial_t^k W_t^{\lambda}f (x) \big\|_{L^2(t^{|m|+2k-1}dt)},
$$
where $m \in \N^n$, $k \in \N$, $|m|+k>0$.
\item[$(3)$] Multipliers of Laplace transform type
$$
T^{\lambda}_{\mathcal{M}} f = h_{\lambda}(\mathcal{M} h_{\lambda}f),
$$
where $\mathcal{M}(z) = |z|^2 \int_0^{\infty} e^{-t|z|^2} \psi(t)\, dt$ with $\psi \in L^{\infty}(dt)$.
\item[$(4)$] Multipliers of Laplace-Stieltjes transform type
$$
T^{\lambda}_{\mathcal{M}} f = h_{\lambda} (\mathcal{M} h_{\lambda}f),
$$
where $\mathcal{M}(z) = \int_{(0,\infty)} e^{-t|z|^2} \, d\nu (t)$ with
$\nu$ being a complex Borel measure on $(0,\infty)$.
\item[$(5)$] Riesz transforms of order $m$
$$
R_m^{\lambda}f (x) = \partial_x^m h_{\lambda} (|\cdot|^{-|m|} h_{\lambda} f ) (x),
$$
where $m \in \N^n$ and $|m|>0$.
\end{itemize}

In \cite{BCN} Betancor, Castro and Nowak showed that the above (vector-valued) operators, excluding (4), are Calderón-Zygmund in the sense of the space of homogeneous type
$(\R, d\mu_\lambda, |\cdot|)$, but under the restriction $\lambda \in [0,\infty)^n$. The objective of this paper is to extend that result to the full range of
$\lambda \in (-1/2,\infty)^n$, see Theorem~\ref{thm:main} below.
Typically, the main technical difficulty connected with the Calder\'on-Zygmund approach is to show the relevant kernel estimates.
Here
we follow the ideas of Nowak and Szarek \cite{NoSz}, where they studied several Calderón-Zygmund operators
in the Laguerre setting for all admissible multi-indices of type in that context.

Our results fit into the line of investigations concerning fundamental harmonic analysis operators associated with various discrete and continuous
orthogonal expansions. The starting point of this active area was the influential paper of Muckenhoupt and Stein \cite{MS}.
In that paper harmonic analysis operators such as 
maximal operators, $g$-functions, multipliers and conjugated functions 
were investigated
in the ultraspherical expansions and in the Bessel setting.
Later these operators have been widely examined in the one-dimensional Bessel context by several authors. For instance, following the ideas in \cite{MS},
Andersen and Kerman proved weighted $L^p$ inequalities for the Bessel-Riesz transforms (see \cite{And,AnKer,Ker1,Ker2}), which were lately extended 
in \cite{BBFMT} and \cite{BFMR} to the higher order Riesz transforms by using Calderón-Zygmund theory.
Also Littlewood-Paley-Stein type $g$-functions have drawn considerable attention.
In particular, they were studied by Stempak in \cite{Stem1} 
(more general mixed square functions were treated in \cite{BFS})
and then used to obtain a multiplier theorem similar to the classical Hörmander-Mikhlin theorem.
Laplace transform type multipliers in the Bessel setting were recently analyzed by means of Calderón-Zygmund theory in \cite{BMR} (a general treatment of this kind of multipliers can be found in Stein's monograph \cite{St1}).
It is also worth mentioning the article of Betancor, Harboure, Nowak and Viviani \cite{BHNV}, where they established power weighted $L^p$ mapping properties
for several operators such as heat and Poisson semigroups maximal operators, $g$-functions and Riesz transforms.
However, in all the mentioned papers the one-dimensional situation was investigated. 
It is worth pointing out that only recently the multi-dimensional Bessel context was considered in \cite{BCC1,BCC2,BCDFNR,BCN}.

The paper is organized as follows. Section \ref{sec:mainres} contains the statement of the main result (Theorem \ref{thm:main}) and the reduction of its proof to showing the standard kernel estimates related to the Calder\'on-Zygmund theory. Further, this section is concluded by a remark concerning operators analogous to (1)-(5) and associated with the square root of $\Delta_\lambda$.
Also, some comments pertaining to the considered multipliers are delivered.
Finally, in Section \ref{sec:ker} various preparatory facts are gathered and then the proofs of the relevant kernel estimates are given.

Throughout the paper we use a standard notation with essentially all symbols pertaining to the space of homogeneous type $(\R,d\mu_{\lambda},|\cdot|)$.
However, for any unexplained symbol or notation we refer the reader to \cite{BCN}.
While writing estimates, we will use the notation $X \lesssim Y$ to
indicate that $X \le CY$ with a positive constant $C$ independent of significant quantities.
We will write $X \simeq Y$ when both $X \lesssim Y$ and $Y \lesssim X$ hold.

\section{Main result} \label{sec:mainres}

Let $\mathbb{B}$ be a Banach space and let $K(x,y)$ be a kernel defined on
$\R\times\R\backslash \{(x,y):x=y\}$ and taking values in $\mathbb{B}$.
We say that $K(x,y)$ is a standard kernel in the sense of the space of homogeneous type
$(\R, d\mu_{\lambda},|\cdot|)$ if it satisfies
the growth estimate
\begin{equation} \label{gr}
\|K(x,y)\|_{\mathbb{B}} \lesssim \frac{1}{\mu_{\lambda}(B(x,|x-y|))}
\end{equation}
and the smoothness estimates
\begin{align}
\| K(x,y)-K(x',y)\|_{\mathbb{B}} & \lesssim \frac{|x-x'|}{|x-y|}\, \frac{1}{\mu_{\lambda}(B(x,|x-y|))},
\qquad |x-y|>2|x-x'|, \label{sm1}\\
\| K(x,y)-K(x,y')\|_{\mathbb{B}} & \lesssim \frac{|y-y'|}{|x-y|}\, \frac{1}{\mu_{\lambda}(B(x,|x-y|))},
\qquad |x-y|>2|y-y'| \label{sm2}.
\end{align}
When $K(x,y)$ is scalar-valued, i.e.\ $\mathbb{B}=\mathbb{C}$, the difference bounds \eqref{sm1}
and \eqref{sm2} are implied by the more convenient gradient estimate
\begin{equation} \label{grad}
|\nabla_{\! x,y} K(x,y)| \lesssim \frac{1}{|x-y|\mu_{\lambda}(B(x,|x-y|))}.
\end{equation}
Notice that in these formulas, the ball $B(x,|y-x|)$ can be replaced by $B(y,|x-y|)$, in view of
the doubling property of $\mu_{\lambda}$.

A linear operator $T$ assigning to each $f\in L^2(d\mu_{\lambda})$ a measurable $\B$-valued function $Tf$ on $\R$ is a (vector-valued) Calder\'on-Zygmund operator in the sense of the space $(\R,d\mu_{\lambda},|\cdot|)$ if
\begin{itemize}
    \item[$(i)$] $T$ is bounded from $L^2(d\mu_{\lambda})$ to $L^2_{\B}(d\mu_{\lambda})$,
    \item[$(ii)$] there exists a standard $\B$-valued kernel $K(x,y)$ such that
\begin{align*}
Tf(x)=\int_{\R}K(x,y)f(y)\,d\mu_{\lambda}(y),\qquad \textrm{a.a.}\,\,\, x\notin \supp f,
\end{align*}
for every $f \in L_c^{\infty}(d\mu_{\lambda})$,
where $L_c^{\infty}(d\mu_{\lambda})$ is the subspace of $L^{\infty}(d\mu_{\lambda})$ of bounded measurable functions
with compact supports.
\end{itemize}

To state the main result of the paper, and also for further use, we
denote by $C_0$ a closed separable subspace of $L^{\infty}(dt)$ consisting of all continuous functions on
$\mathbb{R}_{+}$ which have finite limits as $t\to 0^{+}$ and vanish as $t\to \infty$.
\begin{thm}\label{thm:main}
Let $\lambda \in (-1\slash 2,\infty)^n$, $m \in \N^n$, $k\in \N$ be such that $k+|m|>0$, and assume that $\mathcal{M}$ is as in $(3)$ or as in $(4)$ above. Then each of the operators
\[
W^{\lambda}_*, \quad g_{m,k}^{\lambda}, \quad T_{\mathcal{M}}^{\lambda}, \quad R_m^\lambda,
\qquad \textrm{(excluding $m=0$ in the case of $R_m^\lambda$)},
\]
is a (vector-valued) Calder\'on-Zygmund operator
in the sense of the space of homogeneous type $(\R,d\mu_{\lambda},|\cdot|)$
associated with a Banach space $\B$, where $\B$ is $C_0$, $L^2(t^{|m|+2k-1}dt)$, $\mathbb{C}$, $\mathbb{C}$, respectively.
\end{thm}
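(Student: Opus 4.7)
The plan is to verify conditions $(i)$ and $(ii)$ from the Calder\'on-Zygmund definition for each of the operators $W^{\lambda}_*$, $g_{m,k}^{\lambda}$, $T_{\mathcal{M}}^{\lambda}$, $R_m^\lambda$, with the Banach space~$\B$ indicated in the statement. The $L^2$-boundedness required by~$(i)$ comes essentially for free from general principles. For $W_*^\lambda$ it follows from the fact that $\{W_t^\lambda\}$ is a symmetric diffusion semigroup, combined with Stein's maximal theorem; for $g_{m,k}^\lambda$ it is a consequence of the general Littlewood--Paley--Stein theory for such semigroups; for the two multiplier classes it is immediate from the spectral theorem, since the multiplier functions are bounded; and for $R_m^\lambda$ it may be derived from the spectral decomposition of $\Delta_\lambda$ through the Hankel transform, or by a composition argument. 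Thus the bulk of the work concerns item~$(ii)$: identifying an explicit $\B$-valued kernel $K(x,y)$ representing~$T$ off the diagonal and verifying the growth and smoothness estimates \eqref{gr}--\eqref{sm2}.

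In every case the candidate kernel is obtained from the Bessel heat kernel $W_t^\lambda(x,y)$ of~\eqref{exp:Bhk} in a transparent way: as the function $t \mapsto W_t^\lambda(x,y)$ itself for~(1), as $t \mapsto \partial^m\partial_t^k W_t^\lambda(x,y)$ for~(2), as scalar integrals $\int_0^\infty \partial_t W_t^\lambda(x,y)\psi(t)\,dt$ and $\int_{(0,\infty)} W_t^\lambda(x,y)\,d\nu(t)$ for~(3) and~(4), and as a constant multiple of $\partial^m\int_0^\infty W_t^\lambda(x,y)\,t^{|m|/2-1}\,dt$ for~(5). The formal reduction of the Calder\'on-Zygmund property to kernel estimates of these expressions is already carried out in~\cite{BCN} under the restriction $\lambda \in [0,\infty)^n$, and the same formal computations remain valid in the present range of the parameter. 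One is therefore led to establishing \eqref{gr}, \eqref{sm1} and \eqref{sm2} for these kernels throughout $\lambda \in (-1/2,\infty)^n$.

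The heart of the argument, and its main obstacle, lies in the kernel estimates when some components of $\lambda$ belong to $(-1/2,0)$. The approach of \cite{BCN} rests on a classical product integral representation of $W_t^\lambda(x,y)$ through a measure $d\Omega_\lambda$ on $[-1,1]^n$, which is finite and positive only when $\lambda \in [0,\infty)^n$; for $\lambda_i<0$ that representation loses meaning, so the kernel estimates have to be redone from scratch. Following the strategy of Nowak and Szarek in \cite{NoSz} for the Laguerre setting, the plan is to regularise by working with the shifted measure $d\Omega_{\lambda+\mathbf{1}+\eps}(s)$, which is well-defined and integrable throughout $\lambda\in(-1/2,\infty)^n$ for every $\eps>0$, and to rewrite the kernels of interest as integrals against this measure via repeated application of the recurrences and differentiation identities for the modified Bessel functions $I_\nu$. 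This regularisation is the key technical novelty carried over from the Laguerre context.

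With such a representation in hand, \eqref{gr}, \eqref{sm1} and \eqref{sm2} reduce to pointwise and integral estimates of the heat kernel and its derivatives against $d\Omega_{\lambda+\mathbf{1}+\eps}(s)$, combined with the standard doubling-type estimate $\mu_\lambda(B(x,|x-y|))\simeq \prod_{i=1}^n (x_i+|x-y|)^{2\lambda_i}|x-y|^n$ for the Bessel space of homogeneous type. Passing from such pointwise $s$-uniform bounds to the $\B$-valued norms in cases~(1), (2) and~(5) is routine once the $L^\infty(dt)$ or $L^2(t^{|m|+2k-1}dt)$ integrations are performed under the integral sign. Finally, the parameter $\eps$ is sent to $0^+$ only after all estimates have been established with constants independent of it, so that no loss appears in the final bounds.
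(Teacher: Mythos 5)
Your overall reduction is structured like the paper's: condition $(i)$ and the off-diagonal kernel association are obtained by carrying over the arguments of \cite{BCN} to all $\lambda\in(-1/2,\infty)^n$, and the real content is the standard estimates \eqref{gr}--\eqref{sm2} for the heat-kernel-based kernels. The genuine gap is in the central mechanism you propose for those estimates. You treat $\eps$ as a small positive regularization parameter: you claim the kernels can be rewritten against $d\Omega_{\lambda+\mathbf{1}+\eps}$ ``for every $\eps>0$'', that the bounds should be made uniform in $\eps$, and that $\eps$ is sent to $0^+$ at the end. That is not a workable scheme. The recurrence $I_{\nu}(z)=\frac{2(\nu+1)}{z}I_{\nu+1}(z)+I_{\nu+2}(z)$ shifts the order by integers only, so combined with Schl\"afli's formula \eqref{Iv} it produces the \emph{exact finite} identity \eqref{Bhk}, in which $\eps$ runs over the multi-indices $\{0,1\}^n$: the heat kernel is a finite sum of terms $C_{\lambda,\eps}\,t^{-n/2-|\lambda|-2|\eps|}(xy)^{2\eps}\int \exp(-q(x,y,s)/(4t))\,d\Omega_{\lambda+\mathbf{1}+\eps}(s)$. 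There is no one-parameter family of approximating kernels, the kernels themselves do not depend on any continuous $\eps$, and no limiting procedure is needed or even meaningful, since \eqref{gr}--\eqref{sm2} concern a fixed kernel; moreover $\Omega_{\lambda+\mathbf{1}}$ is already a finite measure for every $\lambda\in(-1/2,\infty)^n$, so a strictly positive shift would buy nothing. As stated, your key step has no content to pass to the limit and does not produce a representation of $W_t^{\lambda}(x,y)$ at all.

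What your sketch also omits is precisely where the difficulty sits once the correct representation \eqref{Bhk} is in hand: the recurrence introduces the extra factors $(xy)^{2\eps}$ and modified powers of $t$, and after differentiation these become terms $x^{2\eps-\beta\eps}y^{2\eps-\gamma\eps}$ with assorted $t$-powers (Lemma \ref{lem:EST3.3}); absorbing them into $1/\mu_{\lambda}(B(x,|x-y|))$ requires the generalized bridge estimate of Lemma \ref{lem:bridge} with suitably chosen $\xi,\kappa$ and its $L^p(t^{W-1}dt)$ version, Lemma \ref{lem:bridge2}. This is the mechanism that overcomes the restriction $\lambda\in[0,\infty)^n$ of \cite{BCN}, not a uniform-in-$\eps$ argument. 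Finally, the smoothness bounds for the vector-valued kernels ($\B=C_0$ and $\B=L^2(t^{|m|+2k-1}dt)$) are not quite ``routine'': the gradient shortcut \eqref{grad} is unavailable, so one applies the Mean Value Theorem for each fixed $t$, obtaining an intermediate point $\theta$ that depends on $t$, and one needs the comparisons $q(\theta,y,s)\simeq q(x\vee x',y,s)\simeq q(x,y,s)$ and the equivalence of $|x-y|\mu_{\lambda}(B(x,|x-y|))$ at $x$ and at $x\vee x'$ (Lemmas \ref{lem:theta} and \ref{lem:double}) to produce a $t$-independent majorant before taking the $\B$-norm. Your proposal would need to be rebuilt around the exact finite-sum representation and these lemmas to close the argument.
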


Using the standard asymptotics for the Bessel function $J_\nu$, $\nu > -1$, (cf. \cite[Chapter III, Section 3$\cdot$1 (8), Chapter VII, Section 7$\cdot$21]{Wat}),
$$
J_{\nu}(z) \simeq z^{\nu}, \quad z \to 0^+, \qquad
J_{\nu}(z) = \mathcal{O}\Big( \frac{1}{\sqrt{z}}\Big), \quad z \to \infty,
$$
we can estimate the one-dimensional kernel of the Hankel transform
\begin{equation*}
    \big|\varphi_{x_i}^{\lambda_i}(y_i)\big|
        \lesssim \left\{
            \begin{array}{rl}
                1, & x_iy_i \leq 1  \\
                (x_iy_i)^{-\lambda_i}, & x_iy_i \geq 1.
            \end{array} \right., \quad i=1, \dots, n.
\end{equation*}
Combining this with the Bessel heat kernel representation \eqref{Bhk} and Lemma~\ref{lem:EST3.3} below, we see that \cite[(13)]{BCN} holds in fact for unrestricted $\lambda \in (-1\slash 2,\infty)^n$. Then the same arguments as those given in \cite{BCN} show that Lemmas 3.5, 3.7 and Remark 3.6 in \cite{BCN} are actually valid for $\lambda \in (-1\slash 2,\infty)^n$.
Consequently, the methods developed in \cite{BCN} to establish
the $L^2(d\mu_\lambda)$-boundedness properties and kernels' associations for $W^{\lambda}_*,$ $g_{m,k}^{\lambda},$ $T_{\mathcal{M}}^{\lambda}$ and $ R_m^\lambda,$
work also in this general case, provided that the standard estimates are true. Note that multipliers of Laplace-Stieltjes transform type were not treated in \cite{BCN}. However, properties
$(i)$ and $(ii)$ above for these operators can be shown essentially in the same way as for the Laplace transform type multipliers.\\

For the sake of clarity and completeness we recall from \cite{BCN} the corresponding Calder\'on-Zygmund kernels and the related Banach spaces.
\begin{itemize}
\item[$(1)$] The kernel associated with the maximal operator is
$$
\mathcal{W}^{\lambda}(x,y) = \big\{W_t^{\lambda}(x,y)\big\}_{t>0}, \qquad \mathbb{B} = C_0 \subset L^{\infty}(dt).
$$
Using \eqref{Bhk} below it can be shown that $\mathcal{W}^{\lambda}(x,y) \in C_0$ for $x\ne y$.
\item[$(2)$] The kernels associated with mixed square functions are
$$
\mathcal{G}^{\lambda}_{m,k}(x,y) = \big\{ \partial_t^k \partial_x^m W_t^{\lambda}(x,y) \big\}_{t>0}, \qquad
    \mathbb{B} = L^2(t^{|m|+2k-1}dt),
$$
where $m \in \N^n$ and $k \in \N$ are such that $|m|+k>0$.
\item[$(3)$] The kernels associated with Laplace transform type multipliers are
$$
K^{\lambda}_{\psi}(x,y) = - \int_0^{\infty} \psi(t) \partial_t W_t^{\lambda}(x,y)\, dt, \qquad
    \mathbb{B}=\mathbb{C},
$$
where $\psi \in L^{\infty}(dt)$.
\item[$(4)$] The kernels associated with Laplace-Stieltjes transform type multipliers are
$$
K^{\lambda}_{\nu}(x,y) =  \int_{(0,\infty)} W_t^{\lambda}(x,y)\, d\nu(t), \qquad
    \mathbb{B}=\mathbb{C},
$$
where $\nu$ are complex Borel measures on $(0,\infty)$.
\item[$(5)$] The kernels associated with Riesz transforms are
$$
R_m^{\lambda}(x,y) = \frac{1}{\Gamma(|m|\slash 2)} \int_0^{\infty} \partial_x^m W_t^{\lambda}(x,y)
    t^{|m|\slash 2 -1}\, dt, \qquad \mathbb{B}=\mathbb{C},
$$
where $m \in \N^n$ is such that $|m| > 0$.
\end{itemize}

Thus to prove Theorem \ref{thm:main}, it suffices to show the following.
\begin{thm}\label{thm:kerest}
Let $\lambda \in (-1\slash 2, \infty)^n$. Then the kernels $(1)$--$(5)$ listed above  satisfy the standard estimates \eqref{gr}, \eqref{sm1} and \eqref{sm2} with the relevant Banach spaces $\B$.
\end{thm}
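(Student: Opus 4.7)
The strategy is to follow the Laguerre blueprint of Nowak and Szarek \cite{NoSz}. The plan is to derive sharp pointwise and gradient bounds for the Bessel heat kernel $W_t^\lambda(x,y)$ that hold for all $\lambda \in (-1\slash 2,\infty)^n$, and then transfer these to each of the kernels $(1)$--$(5)$ by integrating (or taking $L^\infty$ norms) in the $t$-variable against the appropriate weight.

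First, to cover the full range of $\lambda$, in particular the delicate sub-range $\lambda_i \in (-1\slash 2,0)$, I would rewrite each factor $(x_iy_i)^{-\lambda_i+1\slash 2} I_{\lambda_i - 1\slash 2}(x_iy_i\slash(2t))$ using the Poisson-type integral representation for the modified Bessel function in terms of a finite positive measure $d\Omega_{\lambda_i+1\slash 2+\eps}$ on $[-1,1]$, after a harmless upward shift of the index by some $\eps>0$. This expresses $W_t^\lambda(x,y)$ as an integral of a pure Gaussian-type factor against a product of such measures, and makes the singular dependence at $\lambda_i=-1\slash 2$ completely explicit. Differentiation in $x_i$, $y_i$, or $t$ then brings down only polynomial factors inside the integral, controllable by standard Gaussian estimates.

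Second, combining this representation with Lemma~\ref{lem:EST3.3} and the doubling property of $\mu_\lambda$, I would derive pointwise bounds of the form
\[
\bigl|\partial_x^m \partial_y^{m'} \partial_t^k W_t^\lambda(x,y)\bigr| \lesssim \frac{1}{t^{(|m|+|m'|)/2+k}}\,\frac{1}{\mu_\lambda\bigl(B(x,\sqrt{t}+|x-y|)\bigr)}\,\exp\Bigl(-c\frac{|x-y|^2}{t}\Bigr),
\]
uniform in the relevant parameters. The spatial-difference bounds required for \eqref{sm1} and \eqref{sm2} then follow via the mean value theorem applied to the corresponding gradient estimate.

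Third, with these building blocks I would verify \eqref{gr}, \eqref{sm1} and \eqref{sm2} for each Banach-valued kernel separately. For the maximal operator $(1)$, the $C_0$-norm reduces to a supremum in $t$ controlled by elementary one-parameter calculations. For $(2)$--$(5)$, the $\B$-norm (respectively the $\B$-norm of the spatial difference) of the kernel becomes an integral in $t$ of the heat-kernel bounds above, weighted by $t^{|m|+2k-1}dt$, $\psi(t)\,dt$, $d|\nu|(t)$, or $t^{|m|\slash 2-1}dt$, respectively; each can be evaluated by the familiar device of splitting $t$ at the scale $|x-y|^2$, using the Gaussian factor on the small-$t$ side and polynomial bounds on the large-$t$ side. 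The main obstacle is precisely the range $\lambda_i\in(-1\slash 2,0)$: there the naive bounds of \cite{BCN} break down, and the measures $d\Omega_{\lambda_i+1\slash 2}$ are no longer both positive and finite. The technical heart of the argument is to quantify how the shifted measures $d\Omega_{\lambda_i+1\slash 2+\eps}$ introduce an extra factor that can be absorbed into the doubling denominator $\mu_\lambda(B(x,|x-y|))$ uniformly in $\eps>0$, and to verify that the resulting bounds survive all the $t$-integrations listed above --- this is exactly where the method of \cite{NoSz} supersedes that of \cite{BCN}.
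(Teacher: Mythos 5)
Your proposal contains a genuine gap at its central step: the claimed representation of the heat kernel via ``a harmless upward shift of the index by some $\eps>0$'' does not exist. The Schl\"afli--Poisson formula \eqref{Iv} represents $I_{\nu}$ by the measure $\Omega_{\nu+1\slash 2}$ attached to that same order $\nu$; there is no identity expressing $I_{\lambda_i-1\slash 2}$ through the shifted measures $d\Omega_{\lambda_i+1\slash 2+\eps}$, and so the ``technical heart'' you defer to --- absorbing an unspecified extra factor coming from such shifted measures, ``uniformly in $\eps>0$'' --- has nothing to stand on. The actual mechanism that handles $\lambda_i\in(-1\slash 2,0)$ (where \eqref{restW} fails because $\Omega_{\lambda_i}$ is not a finite measure) is the Bessel recurrence $I_\nu(z)=\frac{2(\nu+1)}{z}I_{\nu+1}(z)+I_{\nu+2}(z)$, which raises the order by integer steps at the concrete price of the factors $(xy)^{2\eps}$ and $t^{-2|\eps|}$, $\eps\in\{0,1\}^n$, in \eqref{Bhk}. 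Those extra powers of $x$ and $y$ are precisely what makes the analysis delicate, and they are absorbed into $1\slash\mu_\lambda(B(x,|x-y|))$ only through Lemma \ref{lem:bridge} (with the $(x+y)^{2\xi}$ weight and the carefully matched indices $\xi,\kappa$), applied after the $t$-integration via Lemma \ref{lem:bridge2}. Your sketch never produces these factors nor any substitute for Lemma \ref{lem:bridge}.

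A second problem is that your intermediate pointwise bound
\begin{equation*}
\bigl|\partial_x^m \partial_y^{m'} \partial_t^k W_t^\lambda(x,y)\bigr| \lesssim t^{-(|m|+|m'|)\slash 2-k}\,\frac{1}{\mu_\lambda\bigl(B(x,\sqrt{t}+|x-y|)\bigr)}\,\exp\Bigl(-c\,\frac{|x-y|^2}{t}\Bigr)
\end{equation*}
is asserted rather than derived; proving it for all $\lambda\in(-1\slash 2,\infty)^n$ is essentially the same difficulty you are trying to circumvent, and the paper never passes through such an estimate --- it keeps the integral against $d\Omega_{\lambda+\mathbf{1}+\eps}$ at the pointwise level and converts to ball measure only after taking the $L^p(t^{W-1}dt)$ norm. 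Finally, for the vector-valued kernels ($\mathcal{W}^\lambda$ and $\mathcal{G}^\lambda_{m,k}$) the mean value theorem produces a point $\t=\t(t,x,x',y)$ inside the $t$-norm, so ``apply the MVT to the gradient estimate'' is not enough: one must compare $q(\t,y,s)$ with $q(x\vee x',y,s)$ and the ball measures at the displaced center with those at $x$ (Lemmas \ref{lem:theta} and \ref{lem:double} in the paper); your proposal omits this step as well.
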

The proof of this theorem is the most technical part of this paper and is located in Section \ref{sec:ker}.

We end this section with various comments and remarks connected with our main result.

\begin{rem}
   Let $\{ P_t^\lambda \}_{t>0}$ be the Poisson-Bessel semigroup, which is generated by $-\sqrt{\Delta_\lambda}$. By the subordination principle,
    $$ P_t^{\lambda}f(x) = \int_0^{\infty} W^{\lambda}_{t^2\slash (4u)}f(x) \, \frac{e^{-u}du}{\sqrt{\pi u}},
    \qquad x \in \R, \quad t>0.$$
    We consider the maximal operator, Littlewood-Paley-Stein type square functions and multipliers of Laplace or Laplace-Stieltjes transform
    type based on this semigroup, see \cite{BCN} for exact definitions.
Then an analogous result to Theorem \ref{thm:main} is in force also for these operators.
Basically, proving that for every $\lambda \in (-1/2,\infty)^n$ all these Poisson-type operators are (vector-valued) Calderón-Zygmund operators relies on the same arguments as those exposed in the incoming section, see \cite{BCN}, \cite[Section 3]{NoSz} and \cite[Section 4.3]{Sz}, thus we omit the details.
\end{rem}

We now focus on multipliers of Laplace-Stieltjes transform type.
In view of standard arguments it can be shown that these operators can be extended as bounded operators from $L^p(d\mu_{\lambda})$, 
$1 \le p \le \infty$,
into itself. For the reader
convenience we give the details.

Using \cite[Lemma 2.2]{NoSt7} we see that
\begin{equation}\label{Markovian}
\int_{\R} W_t^\lambda (x,y) \, d\mu_\lambda(x)
= \int_{\R} W_t^\lambda (x,y) \, d\mu_\lambda(y)
= 1, \qquad x,y \in \R
\end{equation}
(in fact the Bessel heat semigroup is a Markovian symmetric diffusion semigroup, see \cite[Proposition 6.2]{NoSt7}).
This together with the Fubini theorem gives
\[
\int_{\R} |K^{\lambda}_{\nu}(x,y)|  \, d\mu_\lambda(x)
+
\int_{\R} |K^{\lambda}_{\nu}(x,y)|  \, d\mu_\lambda(y)
\le 2 |\nu|(0,\infty) < \infty,
\]
where $|\nu|$ stands for the total variation of $\nu$.
By standard arguments (cf. \cite[Lemma 2.1]{NoSt7}) we know that the integral operator
$
f \mapsto \int_{\R} K^{\lambda}_{\nu}(x,y) f(y) \, d\mu_\lambda(y)
$
is bounded on $L^p(d\mu_\lambda)$, $1 \le p \le \infty$.
Therefore it is enough to show that, for every $f \in L^2(d\mu_\lambda)$,
\begin{equation}\label{intrepr}
T^{\lambda}_{\mathcal{M}} f (x)
= \int_{\R} K^{\lambda}_{\nu}(x,y) f(y) \, d\mu_\lambda(y),
\qquad \textrm{a.a.}\,\,\, x\in \R
\end{equation}
(notice that Theorem~\ref{thm:main} delivers this only for a.a. $x\notin \supp f$).
Since both sides of \eqref{intrepr} are bounded on $L^2(d\mu_\lambda)$, it suffices to check that
\[
\langle T^{\lambda}_{\mathcal{M}} f , g\rangle_{d\mu_\lambda}
=
\left\langle \int_{\R} K^{\lambda}_{\nu}(x,y) f(y) \, d\mu_\lambda(y) , g\right\rangle_{d\mu_\lambda},
\qquad f,g \in C_c^\infty(\R).
\]
This, however, can easily be verified with the aid of the Fubini theorem, which application is possible by using \eqref{Markovian}.

The advantage of treating these multipliers by means of the Calder\'on-Zygmund theory lies in the fact that we get 
also boundedness on weighted $L^p(d\mu_\lambda)$, $1<p<\infty$, spaces with a large class of weights admitted.
It seems that using arguments similar to those described above it is impossible to obtain such results.

Finally, note that the weak type $(1,1)$ estimate and $L^p(d\mu_\lambda)$-boundedness, $1<p<\infty$, of multipliers of Laplace transform type could be deduced from a general multiplier theorem of Sikora
\cite[Theorem 2.1]{Si}. In that paper a variant of Calder\'on-Zygmund theory (see \cite[Remark on p.\,329]{Si}) is used to obtain the above mentioned results for a general class of multipliers.
However, in comparison with \cite{Si}, we use the standard Calder\'on-Zygmund theory, which allows us to get more results for these operators 
including weighted $L^p(d\mu_\lambda)$ mapping properties
(for more comments about other results we refer the reader to 
\cite[Section 2]{BCN}).

\section{Kernel estimates}\label{sec:ker}
This section delivers proofs of the standard estimates \eqref{gr}-\eqref{sm2} for all the kernels under consideration. We extend the technique applied by Betancor, Castro and Nowak \cite{BCN}, which is valid
for the restricted range of $\lambda \in [0,\infty)^n$.
This method is based on Schl\"afli's integral representation for the modified Bessel function $I_{\nu}$, see \cite[Chapter VI, Section 6$\cdot$15]{Wat} and \cite[(7)]{BCN},
\begin{equation}\label{Iv}
I_{\nu}(z) = z^{\nu} \int_{[-1,1]} \exp({-z s})\, d\Omega_{\nu + 1\slash 2}(s), \qquad
    z>0, \quad \nu \ge -1\slash 2,
\end{equation}
where the measure $\Omega_\eta$ is a product of one-dimensional measures, $\Omega_{\eta}=\bigotimes_{i=1}^{n}\Omega_{\eta_{i}}$, with
$$
d \Omega_{\eta_i}(s_i) = \frac{ (1-s_i^2)^{\eta_i - 1} ds_i } {\sqrt{\pi} 2^{\eta_i - 1\slash 2} \Gamma{ (\eta_{i}) } },
\qquad s_i \in (-1,1),
\qquad \eta_i > 0,
$$
and in the limit case $\Omega_0$ becomes the sum of unit point masses in $1$ and $-1$ divided by $\sqrt{2\pi}$.
Thus under the restriction $\lambda \in [0,\infty)^n$, the Bessel heat kernel can be written as, see \cite[(8)]{BCN},
\begin{equation}\label{restW}
W_t^{\lambda}(x,y)= \frac{1}{(2t)^{n \slash 2 + |\lambda|}}
   \int_{[-1,1]^n}
    \exp\Big(-\frac{1}{4t} q(x,y,s)\Big) \, d\Omega_{\lambda}(s), \qquad x,y\in\R,\quad t>0,
\end{equation}
where $|\lambda|=\lambda_1 + \ldots + \lambda_n$, and the function $q$ is given
by
$$
q(x,y,s) = |x|^2 + |y|^2 + 2 \sum_{i=1}^n x_i y_i s_i, \qquad x,y \in \mathbb{R}_+^n, \quad s \in [-1,1]^n.
$$

Following the ideas from \cite{NoSz}, to express the Bessel heat kernel for the full range of $\lambda \in (-1 \slash 2,\infty)^n$, we use the recurrence relation for $I_{\nu}$, see \cite[Chapter III, Section 3$\cdot$71]{Wat},
\begin{equation*}
I_{\nu}(z) = \frac{2(\nu+1)}{z} I_{\nu+1}(z) + I_{\nu+2}(z).
\end{equation*}
Combining this with \eqref{exp:Bhk} and \eqref{Iv} we arrive at the formula
\begin{equation}\label{Bhk}
W_t^{\lambda}(x,y)= \sum_{\eps \in \{0,1\}^n} C_{\lambda,\eps} \,
    t^{-n\slash 2-|\lambda|-2|\eps|} (xy)^{2\eps} \int_{[-1,1]^n}
    \exp\Big(-\frac{1}{4t} q(x,y,s)\Big) \, \1,
\end{equation}
where $\mathbf{1}=(1,\ldots,1) \in \R$,
$(xy)^{2\eps} = (x_1 y_1)^{2\eps_1} \cdot \ldots \cdot (x_n y_n)^{2\eps_n}$, and
$C_{\lambda,\eps}=(2\lambda+\mathbf{1})^{\mathbf{1}-\eps} \,
2^{-n\slash 2 - |\lambda| - 2|\eps|}$.
This representation turns out to be convenient for our considerations connected with the Calder\'on-Zygmund theory.

To state the lemma below, and also for further use, it is convenient to introduce the following notation.
Given $x,y \in \R$ and $\alpha \in \mathbb{R}^{n}$, we let
\begin{align*}
x^\alpha &= x_1^{\alpha_1} \cdot \ldots \cdot x_n^{\alpha_n}\\
xy &= (x_1 y_1 , \ldots , x_n y_n)\\
x \vee y &= (\max\{x_1, y_1\}, \ldots, \max\{x_n, y_n\})\\
x \le y & \equiv x_i \le y_i, \qquad i=1,\ldots,n.
\end{align*}
We will often neglect the set of integration $[-1,1]^n$ in integrals against $\1$ and write shortly $\q$ instead of $q(x,y,s)$, provided that it does not lead to a confusion.

\begin{lem}[{\cite[Lemma 2.1]{NoSz}}] \label{lem:bridge}
Let $\lambda \in (-1\slash 2,\infty)^n$. Assume that $\xi,\kappa
\in [0,\infty)^n$ are fixed and such that $\lambda + \xi + \kappa
\in [0,\infty)^n$. Then
\begin{align*}
(x+y)^{2\xi} \int \q^{-n\slash 2 - |\lambda|-|\xi|} \, d\Omega_{\lambda+\xi+\kappa}(s)
& \lesssim \frac{1}{\mu_{\lambda}(B(x,|x-y|))},
\end{align*}
uniformly in $x,y \in \mathbb{R}_+^n$, $x\neq y$.
\end{lem}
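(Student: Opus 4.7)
The plan is to exploit the sum decomposition $q(x,y,s)=\sum_{i=1}^n q_i$ with $q_i=(x_i-y_i)^2+2x_iy_i(1+s_i)\ge 0$, which furnishes the pointwise lower bounds $q\ge|x-y|^2$ and $q\ge\frac{1+s_j}{2}(x_j+y_j)^2$ for each $j$, together with the standard volume estimate $\mu_\lambda(B(x,r))\simeq r^n\prod_{i=1}^n(x_i+r)^{2\lambda_i}$. Morally, a factor $q^{-n/2}$ is used to extract $|x-y|^{-n}$ from the ball volume via $q\ge|x-y|^2$, while the remaining $q^{-|\lambda|-|\xi|}$ is spent against $d\Omega_{\lambda+\xi+\kappa}$ to generate the weight $\prod(x_i+|x-y|)^{-2\lambda_i}$ and simultaneously cancel the prefactor $(x+y)^{2\xi}$.

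First I would change variables $u_i:=1+s_i\in[0,2]$, so that $q=|x-y|^2+2\sum x_iy_iu_i$ and $d\Omega_{\lambda+\xi+\kappa}(s)\simeq c\prod u_i^{\beta_i-1}(2-u_i)^{\beta_i-1}\,du_i$ with $\beta_i:=\lambda_i+\xi_i+\kappa_i\ge 0$. Then I would decompose $[0,2]^n$ into $2^n$ dyadic cells $R_S=\prod_{i\in S}[1,2]\times\prod_{i\notin S}[0,1]$, indexed by $S\subseteq\{1,\ldots,n\}$. On $R_S$ the weight is bounded away from its singularities in the $i\in S$ coordinates, and $q$ is comparable to $\sum_{i\in S}(x_i+y_i)^2+\sum_{i\notin S}\bigl[(x_i-y_i)^2+2x_iy_iu_i\bigr]$; integrating out the $i\in S$ coordinates reduces matters to a lower-dimensional problem of the same shape with $|x-y|^2$ replaced by an enlarged squared distance $a_S^2$. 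In each such lower-dimensional problem one can further split the $[0,1]$-coordinates at the thresholds $u_j=a_S^2/(2x_jy_j)$, so that every piece reduces to elementary $\min\{1,a_S^2/(x_jy_j)\}^{\beta_j}$ factors multiplied by a power of $a_S^2$.

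The main obstacle is the careful bookkeeping required to match each resulting expression---of the generic form $(a_S^2)^{-\gamma}\prod(x_iy_i)^{-\sigma_i}\prod(x_i+y_i)^{-\tau_i}$---against the target $(x+y)^{-2\xi}|x-y|^{-n}\prod(x_i+|x-y|)^{-2\lambda_i}$ in every regime. The comparison is non-uniform, because $(x_i+|x-y|)^{2\lambda_i}$ behaves differently when $x_i\le|x-y|$ than when $x_i>|x-y|$, and because some $\lambda_i$ may be negative, which reverses the direction of several natural inequalities. The hypothesis $\lambda_i+\xi_i+\kappa_i\ge 0$ provides exactly enough slack to keep each $\int_0^1 u^{\beta_i-1}\,du$ finite and to absorb negative powers of $x_iy_i$, but this slack must be allocated coordinate by coordinate. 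A clean way to organize the analysis is by induction on $n$, strengthening the inductive hypothesis to allow an extra nonnegative constant added to $|x-y|^2$ that accumulates from previously-integrated coordinates; this parallels the argument used in \cite{NoSz} in the Laguerre setting.
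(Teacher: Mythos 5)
The first thing to note is that the paper contains no proof of Lemma~\ref{lem:bridge}: it is imported verbatim as \cite[Lemma 2.1]{NoSz} (itself a generalization of \cite[Proposition 5.9]{NS1}), so the benchmark is the argument in those papers. Your outline is of the same family as that argument: the lower bounds $q\ge|x-y|^2$ and $q_i\gtrsim(1+s_i)(x_i+y_i)^2$, reduction to one-dimensional integrals in $s_i$ with a case analysis according to the size of $x_iy_i$ relative to the remaining quantities, and a final comparison with $\mu_{\lambda}(B(x,|x-y|))\simeq|x-y|^n\prod_i(x_i+|x-y|)^{2\lambda_i}$ are exactly the ingredients used there.

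However, as written your proposal is a plan rather than a proof: the entire content of the lemma is the ``bookkeeping'' you explicitly defer. After the splitting in $u_i=1+s_i$ and at the thresholds $u_j\sim a_S^2/(x_jy_j)$ one indeed gets expressions of the generic form you describe, but the lemma is precisely the assertion that in \emph{every} regime (in particular when some $\lambda_i<0$, where several natural inequalities reverse, and in the dichotomy $x_i\le|x-y|$ versus $x_i>|x-y|$) these are dominated by $(x+y)^{-2\xi}\,|x-y|^{-n}\prod_i(x_i+|x-y|)^{-2\lambda_i}$. None of these comparisons is carried out, and the proposed remedy (induction on $n$ with a strengthened hypothesis) is only named, not set up. In particular you never address how the mismatch between the exponent $-n/2-|\lambda|-|\xi|$ of $q$ and the index $\lambda+\xi+\kappa$ of the measure is resolved: $\kappa$ enters only as harmless slack (since $(1-s_i^2)^{\kappa_i}\le1$, it is what makes the measure well defined when $\lambda_i+\xi_i\le0$), whereas $\xi$ must simultaneously lower the exponent of $q$ and cancel the prefactor $(x+y)^{2\xi}$; your sketch lumps these into a single $\beta_i$ and loses that distinction. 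Two smaller points: the density formula for $d\Omega_{\lambda+\xi+\kappa}$ fails when some $\lambda_i+\xi_i+\kappa_i=0$, which the hypothesis allows — there $\Omega_0$ is a pair of point masses at $\pm1$ and must be handled separately — and the claim that $\beta_i\ge0$ keeps $\int_0^1u^{\beta_i-1}\,du$ finite is false at $\beta_i=0$. So the strategy is sound and consistent with the cited proof, but the decisive estimates are missing.
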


This technical result, which is a natural generalization of \cite[Proposition 5.9]{NoSt2}, is one of the main points in the whole method of proving kernel estimates. It establishes a relation between
expressions involving certain integrals with respect to
$d \Omega_{\lambda+\mathbf{1}+\eps}(s)$, see \eqref{Bhk}, and the standard estimates
for the space $(\R,d\mu_{\lambda},|\cdot|)$.

It should be noted that for every $\lambda \in (-1\slash 2,\infty)^n$
the $\mu_{\lambda}$-measure of the ball $B(x,R)$ can be described by the same formula as in \cite[Section 3]{BCN}, see \cite[Lemma 2.2]{NoSz},
$$
\mu_{\lambda}(B(x,R)) \simeq R^n \prod_{j=1}^n (x_j + R)^{2\lambda_j},
 \qquad x \in \R, \quad R>0.
$$

To estimate kernels defined via $W_t^\lambda(x,y)$ we will frequently use the following generalization of \cite[Lemma 3.3]{BCN}.
\begin{lem}\label{lem:EST3.3}
Let $W \in \mathbb{R}$, $m,r \in \N^n$, $k \in \N$ and $\eps \in \{0,1\}^n$. Then
\begin{align*}
& \bigg| \partial^k_t \partial_x^m \partial_y^r
\bigg[t^{W}
    (xy)^{2\eps} \exp\Big(-\frac{1}{4t} \q \Big)
\bigg]\bigg| \\
& \quad \lesssim
   \sum_{\beta,\gamma \in \{0,1,2\}^n}
    x^{2\eps-\beta\eps} y^{2\eps-\gamma\eps}
    t^{W-k-(|m|-|\beta\eps|+|r|-|\gamma\eps|)\slash 2}
    \exp\Big(-\frac{1}{8t} \q \Big),
\end{align*}
uniformly in $x,y \in \R$, $t>0$ and $s \in [-1,1]^n$.
\end{lem}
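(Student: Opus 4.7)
The strategy is a Leibniz expansion combined with two elementary inputs. Let $E(x,y,t,s) = \exp(-\q/(4t))$. Applying Leibniz one coordinate at a time to
\[
\partial_{x_i}^{m_i}\bigl[x_i^{2\eps_i} E\bigr] = \sum_{a_i=0}^{m_i}\binom{m_i}{a_i}\bigl(\partial_{x_i}^{a_i}x_i^{2\eps_i}\bigr)\,\partial_{x_i}^{m_i-a_i}E,
\]
I use that $\partial_{x_i}^{a_i}x_i^{2\eps_i}$ vanishes whenever $a_i > 2\eps_i$. Writing $a_i = \beta_i\eps_i$ with $\beta_i \in \{0,1,2\}$ (with the convention $\beta_i = 0$ when $\eps_i = 0$) forces $a_i \in \{0,1,2\}$ and reproduces the surviving polynomial factor $x_i^{2\eps_i - \beta_i\eps_i}$ that appears in the statement; the role of $\gamma$ on the $y$-side is identical. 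Terms with $a_i > m_i$ simply vanish, so extending the double sum to all $\beta,\gamma \in \{0,1,2\}^n$ is a harmless enlargement. After expanding $\partial_t^k$ via Leibniz among $t^W$, $(xy)^{2\eps}$ and $E$, the task reduces to controlling products of iterated derivatives of $E$ weighted by suitable powers of $t$.

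The analytic content rests on two elementary observations. First, from $\partial_{x_i}E = -\tfrac{x_i + y_i s_i}{2t}E$, $\partial_{y_i}E = -\tfrac{y_i + x_i s_i}{2t}E$ and $\partial_t E = \tfrac{\q}{4t^2}E$, a routine Hermite-type recursion shows that any iterated derivative $\partial_{x_i}^{j_1}\partial_{y_i}^{j_2}\partial_t^{j_3}E$ equals a polynomial in $(x_i + y_i s_i)/\sqrt{t}$, $(y_i + x_i s_i)/\sqrt{t}$, $\sqrt{\q}/\sqrt{t}$ and $1/\sqrt{t}$ of total weight $j_1 + j_2 + 2j_3$, multiplied by $E$. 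Second, the identity
\[
\q \;=\; \sum_{j=1}^{n}\bigl[(x_j + y_js_j)^2 + y_j^2(1-s_j^2)\bigr] \;=\; \sum_{j=1}^{n}\bigl[(y_j + x_js_j)^2 + x_j^2(1-s_j^2)\bigr],
\]
valid for $s \in [-1,1]^n$, yields the uniform bounds $|x_i + y_is_i| \le \sqrt{\q}$ and $|y_i + x_is_i| \le \sqrt{\q}$. Substituting these into the previous polynomial reduces every term of the Leibniz expansion to a polynomial $P(\sqrt{\q/t})$ times $E$, with $\deg P$ bounded in terms of $m$, $r$, $k$.

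The argument closes via the absorption inequality $u^N e^{-u/4} \lesssim e^{-u/8}$ for $u \ge 0$ (constants depending on $N$), applied with $u = \q/t$: it collapses $P(\sqrt{\q/t})\,E$ into a constant times $\exp(-\q/(8t))$. Counting powers of $t$ now matches the statement, since each of the $k$ time derivatives eventually contributes $t^{-1}$ (after absorbing a possible factor $\q/t$), and each unconsumed $x_i$- or $y_i$-derivative contributes $t^{-1/2}$, yielding precisely the exponent $W - k - (|m| - |\beta\eps| + |r| - |\gamma\eps|)/2$. The step I expect to be most delicate is combinatorial rather than analytic: one must organize the Leibniz expansion so that cross terms — for instance $\partial_{x_j}$ acting on a factor $(y_i + x_i s_i)$ previously extracted by $\partial_{y_i}$, which contributes a bounded $\delta_{ij}s_i$ — are all captured and collected into the finite double sum over $\beta,\gamma \in \{0,1,2\}^n$, while verifying that the resulting constants depend only on $m,r,k,\lambda$ and not on $(x,y,t,s)$. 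The analytic content of the lemma is entirely encoded in the two observations above.
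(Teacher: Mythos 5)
Your proposal is correct and follows essentially the same route as the paper's proof: Leibniz' rule to peel off the factors $x_i^{2\eps_i}$, $y_i^{2\eps_i}$ (producing the indices $\beta,\gamma$), an explicit description of the iterated derivatives of the Gaussian factor (which the paper simply imports from \cite{BCN}, formulas (9)--(10), instead of rederiving it), the bound $|x_i+y_is_i|,|y_i+x_is_i|\le\sqrt{\mathfrak{q}}$, i.e. $|\partial_{x_i}\mathfrak{q}|,|\partial_{y_i}\mathfrak{q}|\lesssim\mathfrak{q}^{1/2}$, and absorption of the resulting powers of $\mathfrak{q}/t$ at the cost of replacing $\exp(-\mathfrak{q}/(4t))$ by $\exp(-\mathfrak{q}/(8t))$. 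Only your bookkeeping phrase ``total weight $j_1+j_2+2j_3$'' is slightly off as stated (e.g. $\partial_{x_i}E=-\tfrac12\,t^{-1/2}\,\frac{x_i+y_is_i}{\sqrt{t}}\,E$ already has degree $2$ in your listed quantities), but what your argument actually uses --- a prefactor $t^{-(j_1+j_2)/2-j_3}$ times a polynomial of bounded degree in quantities dominated by $\sqrt{\mathfrak{q}/t}$ --- is correct, so the final power count and the conclusion are unaffected.
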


\begin{proof}
    First of all, we observe that
    $$\partial^k_t \partial_x^m \partial_y^r \bigg[t^{W} (xy)^{2\eps} \exp\Big(-\frac{1}{4t} \q \Big) \bigg]
        = \partial^k_t \bigg[t^{W} \prod_{i=1}^n \partial_{x_i}^{m_i} \partial_{y_i}^{r_i} \Big( (x_iy_i)^{2\eps_i} \exp\Big(-\frac{1}{4t} \q_i \Big) \Big) \bigg],$$
    where $\q_i=x_i^2+y_i^2+2x_iy_is_i$, $i=1, \dots, n$. Given $i \in \{1, \dots, n\}$, we distinguish two cases. If $\eps_i=0$, then by \cite[(10)]{BCN}
    we know that
    \begin{align*}
     &   \partial_{x_i}^{m_i} \partial_{y_i}^{r_i} \exp\Big( -\frac{1}{4t}\q_i\Big)\\
        & \quad
            =   \sum_{\substack{0 \le M_i \le m_i\\ 0 \le R_i \le r_i}} P_{m_i,r_i,M_i,R_i}(s_i) \, t^{-(m_i+r_i+M_i+R_i)\slash 2}
             \big( \partial_{x_i}\q_i\big)^{M_i} \big(\partial_{y_i} \q_i\big)^{R_i} \exp\Big( -\frac{1}{4t} \q_i\Big),
    \end{align*}
    where $P_{m_i,r_i,M_i,R_i}$ are polynomials. On the other hand, when $\eps_i=1$, an application of Leibniz' rule and again \cite[(10)]{BCN}
    leads to
    \begin{align*}
        \partial_{x_i}^{m_i} \partial_{y_i}^{r_i} &\Big( (x_iy_i)^{2} \exp\Big(-\frac{1}{4t} \q_i \Big) \Big)\\
            = &  \sum_{\beta_i,\gamma_i \in \{0,1,2\}} C_{m_i,r_i,\beta_i,\gamma_i}
\, \chi_{ \{ \beta_i \le m_i \} }  \chi_{ \{ \gamma_i \le r_i \} }
x_i^{2-\beta_i} y_i^{2-\gamma_i}
\partial_{x_i}^{m_i-\beta_i} \partial_{y_i}^{r_i-\gamma_i} \exp\Big( -\frac{1}{4t}\q_i\Big) \\
            = & \sum_{\beta_i,\gamma_i \in \{0,1,2\}} x_i^{2-\beta_i} y_i^{2-\gamma_i}
                    \sum_{\substack{0 \le M_i \le m_i-\beta_i\\ 0 \le R_i \le r_i-\gamma_i}} P_{m_i,r_i,\beta_i,\gamma_i,M_i,R_i}(s_i) \, t^{-(m_i-\beta_i+r_i-\gamma_i+M_i+R_i)\slash 2} \\
            & \times \big( \partial_{x_i}\q_i\big)^{M_i} \big(\partial_{y_i} \q_i\big)^{R_i} \exp\Big( -\frac{1}{4t} \q_i\Big),
    \end{align*}
    with $C_{m_i,r_i,\beta_i,\gamma_i} \in \mathbb{R}$ and $P_{m_i,r_i,\beta_i,\gamma_i,M_i,R_i}$ being polynomials. Hence,
    \begin{align*}
        \partial_x^m \partial_y^r & \bigg[(xy)^{2\eps} \exp\Big(-\frac{1}{4t} \q \Big) \bigg]
            = \sum_{\beta,\gamma \in \{0,1,2\}^n} x^{2\eps-\beta\eps} y^{2\eps-\gamma\eps} \\
             & \times   \sum_{\substack{0 \le M \le m-\beta\eps\\ 0 \le R \le r-\gamma\eps}} P_{m,r,\beta,\gamma,M,R,\eps}(s) \, t^{-(|m|-|\beta\eps|+|r|-|\gamma\eps|+|M|+|R|)\slash 2}
             \big( \partial_{x}\q\big)^{M} \big(\partial_{y} \q\big)^{R} \exp\Big( -\frac{1}{4t} \q\Big).
    \end{align*}
    Now it remains to take derivatives with respect to $t$. By \cite[(9)]{BCN}, it follows that
    \begin{align*}
        \partial^k_t & \bigg[t^{W} \partial_x^m \partial_y^r \bigg( (xy)^{2\eps} \exp\Big(-\frac{1}{4t} \q \Big) \bigg) \bigg]
            = \sum_{\beta,\gamma \in \{0,1,2\}^n}  x^{2\eps-\beta\eps} y^{2\eps-\gamma\eps}
              \sum_{\substack{0 \le M \le m-\beta\eps\\ 0 \le R \le r-\gamma\eps}} P_{m,r,\beta,\gamma,M,R,\eps}(s) \\
            & \times   \big( \partial_{x}\q\big)^{M} \big(\partial_{y} \q\big)^{R}
               \sum_{0 \leq j \leq k} \alpha_{j,k,W,m,r,\beta,\gamma,M,R,\eps}  \,      t^{W-k-j-(|m|-|\beta\eps|+|r|-|\gamma\eps|+|M|+|R|)\slash 2} \q^j \exp\Big( -\frac{1}{4t} \q\Big),
    \end{align*}
    for some $\alpha_{j,k,W,m,r,\beta,\gamma,M,R,\eps} \in \mathbb{R}$.

Finally, using the estimates
\[
|\partial_{x_i} \q | \lesssim \q^{1/2}, \quad
|\partial_{y_i} \q | \lesssim \q^{1/2}, \qquad
i=1, \dots, n,
\]
and the fact that $\sup_{z \geq 0} z^\alpha e^{-z} < \infty$ for a fixed $\alpha \ge 0$, we get the asserted bound.

\end{proof}

In the next lemma only values of $p \in \{ 1,2,\infty \}$ will be needed for our purposes. However, using the result with $p \in (2,\infty)$ would lead to obtain the
standard estimates for more general Littlewood-Paley-Stein type $g$-functions, for instance $g_{m,k,r}^{\lambda,W}$ investigated in \cite{BCN}.

\begin{lem}\label{lem:bridge2}
Assume that $\lambda \in (-1\slash 2 , \infty)^{n}$, $1 \le p \le \infty$,
$W \in \mathbb{R}$ and $C>0$. Further, let $\eps \in \{ 0,1 \}^{n}$ and $\vt, \vr \in \{ 0,1,2 \}^{n}$
be such that $\vt \le 2 \eps$ and $\vr \le 2 \eps$. Given $u \ge 0$, we consider the function
$\Upsilon_u \colon \R \times \R \times \mathbb{R}_+ \to \mathbb{R}$ defined by
\begin{align*}
 \Upsilon_{u}(x,y,t) =
    t^{ -n \slash 2 - |\lambda| - 2|\eps| + |\vt|\slash 2+|\vr|\slash 2 - W\slash p-u\slash 2}
    \,x^{2\eps-\vt}y^{2\eps-\vr}
\int  \exp\Big(-\frac{C\q}{t} \Big) \, \1,
\end{align*}
where $W \slash p = 0$ for $p=\infty$. Then $\Upsilon_u$ satisfies the integral estimate
\begin{align*}
\big\|\Upsilon_{u}\big(x,y,t\big)\big\|_{L^{p}(t^{W-1}dt)}
\lesssim
\frac{1}{|x-y|^{u}} \;
\frac{1}{\mu_{\lambda}(B(x,|y-x|))}
\end{align*}
uniformly in $x,y \in \R$, $x\neq y$.
\end{lem}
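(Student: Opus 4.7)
The plan is to reduce the bound to an application of Lemma~\ref{lem:bridge} via a Minkowski-type interchange of the $L^p$-norm with the integration against $d\Omega_{\lambda+\mathbf{1}+\eps}(s)$, followed by an elementary gamma-function computation in the $t$ variable.

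First I would treat the case $1 \le p < \infty$. Since the integrand is non-negative, Minkowski's integral inequality yields
\[
\|\Upsilon_u\|_{L^p(t^{W-1}dt)} \le x^{2\eps-\vt}\,y^{2\eps-\vr} \int \big\|t^{a}\exp(-C\q/t)\big\|_{L^p(t^{W-1}dt)}\,\1,
\]
where $a = -n\slash 2 - |\lambda| - 2|\eps| + |\vt|\slash 2 + |\vr|\slash 2 - W\slash p - u\slash 2$. Raising to the $p$-th power and collecting terms, the power of $t$ in the integrand becomes $pa + W - 1 = -pB - 1$ with
\[
B = n\slash 2 + |\lambda| + 2|\eps| - |\vt|\slash 2 - |\vr|\slash 2 + u\slash 2.
\]
The change of variable $t = \q/v$ identifies the inner integral with $\Gamma(pB)(pC)^{-pB}\q^{-pB}$, so the inner norm equals a harmless constant times $\q^{-B}$. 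Convergence of the gamma integral requires $pB > 0$, which is automatic: $\lambda_i > -1\slash 2$ gives $|\lambda| > -n\slash 2$, the hypotheses $\vt,\vr \le 2\eps$ yield $2|\eps| - |\vt|\slash 2 - |\vr|\slash 2 \ge 0$, and $u \ge 0$. The case $p = \infty$ is handled identically using the pointwise bound $\sup_{t>0} t^{a}\exp(-C\q/t) \lesssim \q^{a}$ valid for $a < 0$; with the convention $W\slash p = 0$ one lands on the same power $\q^{-B}$.

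At this stage I am left with $\|\Upsilon_u\|_{L^p(t^{W-1}dt)} \lesssim x^{2\eps-\vt}y^{2\eps-\vr}\int \q^{-B}\,\1$. Since $s_i \in [-1,1]$ and $x_i,y_i \ge 0$, the identity $\q - |x-y|^2 = 2\sum_i x_iy_i(s_i+1) \ge 0$ allows peeling off $\q^{-u\slash 2} \le |x-y|^{-u}$. It then remains to establish
\[
x^{2\eps-\vt}y^{2\eps-\vr}\int \q^{-n\slash 2 - |\lambda| - 2|\eps| + |\vt|\slash 2 + |\vr|\slash 2}\,\1 \lesssim \frac{1}{\mu_{\lambda}(B(x,|x-y|))}.
\]
Bounding $x^{2\eps-\vt}y^{2\eps-\vr} \le (x+y)^{4\eps-\vt-\vr}$ componentwise, this is exactly Lemma~\ref{lem:bridge} applied with the choices $\xi_i = 2\eps_i - \vt_i\slash 2 - \vr_i\slash 2$ and $\kappa_i = 1 - \eps_i + \vt_i\slash 2 + \vr_i\slash 2$. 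A short case check, splitting $\eps_i = 0$ and $\eps_i = 1$, confirms $\xi,\kappa \in [0,\infty)^n$; the identity $\xi + \kappa = \mathbf{1} + \eps$ matches the measure in the lemma, and $\lambda + \xi + \kappa \in [0,\infty)^n$ follows from $\lambda_i > -1\slash 2 > -1 - \eps_i$.

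The only genuine obstacle is index bookkeeping: one must thread $\eps,\vt,\vr,p,W,u$ through three simultaneous constraints — positivity of $pB$ for convergence of the gamma integral, non-negativity of both $\xi$ and $\kappa$, and matching of the order parameter of $d\Omega$ with $\mathbf{1}+\eps$. As indicated above, the hypotheses $\vt,\vr \le 2\eps$, $\eps \in \{0,1\}^n$ and $\lambda \in (-1\slash 2,\infty)^n$ are precisely what make all three conditions hold simultaneously, so no additional work beyond careful accounting is required.
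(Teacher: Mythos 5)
Your proposal is correct and follows essentially the same route as the paper's proof: Minkowski's integral inequality, the gamma-function evaluation of the $t$-integral after the change of variable $\q\slash t\mapsto\tau$, extraction of $|x-y|^{-u}$ via $\q\ge|x-y|^2$, and an application of Lemma~\ref{lem:bridge} with exactly the same choices $\xi=2\eps-\vt\slash 2-\vr\slash 2$ and $\kappa=\mathbf{1}-\eps+\vt\slash 2+\vr\slash 2$. Your explicit treatment of $p=\infty$ and the index bookkeeping are fine; the paper simply leaves that case to the reader.
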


\begin{proof}
We assume that $p<\infty$. The case $p=\infty$ is similar and is left to the reader.

Applying Minkowski's integral inequality and then changing the variable $\q \slash t \mapsto \tau$ and using the inequality $|x-y|^2 \le \q$, we obtain
\begin{align*}
& \|\Upsilon_{u}(x,y,t)\|_{L^{p}(t^{W-1}dt)}\\
& \quad \le
    x^{2\eps-\vt} y^{2\eps-\vr} \int
    \Big\| t^{ -n \slash 2 - |\lambda| - 2|\eps| + |\vt|\slash 2+|\vr|\slash 2 - W\slash p-u\slash 2}
    \exp\Big(-\frac{C\q}{t} \Big) \Big\|_{L^{p}(t^{W-1}dt)} \, \1\\
& \quad =
        x^{2\eps-\vt} y^{2\eps-\vr} \int
    \q^{-n\slash 2 -|\lambda|- 2|\eps| + |\vt|\slash 2+|\vr|\slash 2-u\slash 2}
         \, \1\\
&\qquad \times
    \bigg( \int_0^\infty \tau^{ p ( n\slash 2 +|\lambda| + 2|\eps| - |\vt|\slash 2 - |\vr|\slash 2 + u\slash 2) - 1 } \exp(-Cp\tau) \, d\tau \bigg)^{1\slash p}\\
& \quad \lesssim
    \frac{1}{|x-y|^{u}}
    (x+y)^{2(2\eps-\vt \slash 2 - \vr \slash 2)}
    \int \q^{-n\slash 2-|\lambda|-|2\eps - \vt \slash 2 - \vr \slash 2|}
    \, \1.
\end{align*}
Now the required bound follows by means of Lemma \ref{lem:bridge} specified to
$\xi=2\eps-\vt \slash 2-\vr \slash 2$ and $\kappa=\mathbf{1} - \eps + \vt \slash 2+\vr \slash 2$.
\end{proof}

The two lemmas below will be useful in justifying the smoothness estimates \eqref{sm1} and \eqref{sm2} when the corresponding kernel is not scalar valued
($\B \ne \mathbb{C}$).

\begin{lem}[{\cite[Lemma 4.5]{Sz}}, {\cite[Lemma 4.3]{Sz1}}] \label{lem:theta}
Let $x,y,z\in\R$ and $s \in [-1,1]^n$. Then
$$
\frac{1}{4} q(x,y,s) \le q(z,y,s) \le 4 q(x,y,s),
$$
provided that $|x-y|>2|x-z|$. Similarly, if $|x-y|>2|y-z|$ then
$$
\frac{1}{4} q(x,y,s) \le q(x,z,s) \le 4 q(x,y,s).
$$
\end{lem}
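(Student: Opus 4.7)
The plan is to exploit an algebraic completion-of-squares identity for $q$ that reduces the statement to a one-line perturbation estimate. The key observation is the decomposition
\[
q(x,y,s) \;=\; \sum_{i=1}^n (x_i + y_i s_i)^2 \;+\; \sum_{i=1}^n y_i^2(1-s_i^2),
\]
which follows from the scalar identity $a^2+b^2+2ab\sigma=(a+b\sigma)^2+b^2(1-\sigma^2)$. By the symmetry $x \leftrightarrow y$ one also has
\[
q(x,y,s) \;=\; \sum_{i=1}^n (x_i s_i + y_i)^2 \;+\; \sum_{i=1}^n x_i^2(1-s_i^2).
\]
Two consequences will be used repeatedly: $q(x,y,s)\ge 0$, and, since $s_i\ge -1$ and $x_i,y_i\ge 0$, the lower bound $q(x,y,s)\ge |x-y|^2$.

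To prove the first half of the lemma, I would set $A_i = x_i+y_i s_i$ and $B_i = z_i+y_i s_i$, so that $B_i-A_i = z_i-x_i$. The elementary inequality $(A_i+h_i)^2 \le 2A_i^2+2h_i^2$ with $h_i = z_i-x_i$ gives $\sum B_i^2 \le 2\sum A_i^2 + 2|z-x|^2$, whence
\[
q(z,y,s) \;\le\; 2\,q(x,y,s) + 2|z-x|^2.
\]
Using $|x-y|>2|x-z|$ and $q(x,y,s)\ge |x-y|^2$, one has $|z-x|^2 \le \tfrac14|x-y|^2 \le \tfrac14 q(x,y,s)$, which yields $q(z,y,s)\le \tfrac52 q(x,y,s)\le 4\,q(x,y,s)$. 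For the reverse direction I would apply the same inequality the other way: $A_i^2 \le 2B_i^2 + 2(z_i-x_i)^2$, so $q(x,y,s) \le 2\,q(z,y,s) + 2|z-x|^2 \le 2\,q(z,y,s) + \tfrac12 q(x,y,s)$, i.e.\ $q(x,y,s)\le 4\,q(z,y,s)$.

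The second assertion, assuming $|x-y|>2|y-z|$, is handled by the same argument using the second decomposition, with $y$ playing the role previously played by $x$ and $z$ perturbing $y$. I do not expect any significant obstacle here: the only non-mechanical step is spotting the completion-of-squares identity for $q$, after which everything reduces to the standard $(a+h)^2\le 2a^2+2h^2$ estimate combined with the trivial bound $|x-y|^2\le q(x,y,s)$.
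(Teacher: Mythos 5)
Your argument is correct: the completion-of-squares identity $a^2+b^2+2ab\sigma=(a+b\sigma)^2+b^2(1-\sigma^2)$ does hold, it exhibits the $x$-dependence of $q(\cdot,y,s)$ as a pure translation, and the rest (the bound $(A+h)^2\le 2A^2+2h^2$, the lower bound $q(x,y,s)\ge|x-y|^2$ valid because $x_i,y_i>0$ and $s_i\ge-1$, and the absorption step in the reverse direction) goes through exactly as you write, giving $q(z,y,s)\le\tfrac52 q(x,y,s)$ and $q(z,y,s)\ge\tfrac14 q(x,y,s)$, both within the stated constants; the second assertion follows by the symmetric decomposition as you indicate. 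Note, however, that this paper contains no proof of the lemma at all: it is imported verbatim from \cite[Lemma 4.5]{Sz} and \cite[Lemma 4.3]{Sz1}, so there is no in-paper argument to compare against. Compared with the route usually taken for such estimates, your proof works at the level of squares; a marginally cleaner variant of the same idea is to observe that your identity writes $\sqrt{q(x,y,s)}$ as the Euclidean norm in $\mathbb{R}^{2n}$ of a vector whose dependence on $x$ is a translation, so $x\mapsto\sqrt{q(x,y,s)}$ is $1$-Lipschitz; combined with $\sqrt{q(x,y,s)}\ge|x-y|>2|x-z|$ this gives $\tfrac12\sqrt{q(x,y,s)}\le\sqrt{q(z,y,s)}\le\tfrac32\sqrt{q(x,y,s)}$, hence the constants $\tfrac14$ and $4$ directly, without the doubling inequality. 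Either way, your proposal is a complete and valid proof of the statement.
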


\begin{lem}[{\cite[Lemma 4.5]{Sz1}}] \label{lem:double}
Let $\lambda \in (-1 \slash 2,\infty)^n$. We have
\begin{align*}
\frac{1}{|z-y|\mu_{\lambda}(B(z,|z-y|))}
\simeq
\frac{1}{|x-y|\mu_{\lambda}(B(x,|x-y|))}
\end{align*}
on the set $\{(x,y,z) \in \R \times \R \times \R : |x-y|>2|x-z|\}$.
\end{lem}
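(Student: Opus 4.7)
The plan is to split the claim into two independent comparabilities: $|z-y|\simeq |x-y|$ for the linear factor in the denominator, and $\mu_{\lambda}(B(z,|z-y|)) \simeq \mu_{\lambda}(B(x,|x-y|))$ for the measure factor. Both should follow from the hypothesis $|x-y|>2|x-z|$ together with the explicit ball formula recalled immediately after Lemma~\ref{lem:bridge}.

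First, the linear comparability is a direct application of the triangle inequality: from $|x-z|<|x-y|/2$ one immediately obtains
\[
\tfrac{1}{2}|x-y| \; < \; |z-y| \; < \; \tfrac{3}{2}|x-y|.
\]

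Second, I would exploit the formula
\[
\mu_{\lambda}(B(w,R)) \simeq R^n \prod_{j=1}^{n}(w_j+R)^{2\lambda_j},
\]
so that the measure comparability reduces to checking $(z_j+|z-y|)\simeq (x_j+|x-y|)$ for each coordinate $j$. Since $|z_j-x_j|\le |x-z|<|x-y|/2$ and $|z-y|\simeq |x-y|$ by the previous step, a direct estimate gives
\[
z_j+|z-y| \;\le\; x_j+\tfrac{1}{2}|x-y|+\tfrac{3}{2}|x-y| \;\le\; 2\bigl(x_j+|x-y|\bigr),
\]
and by symmetry also $x_j+|x-y|\lesssim z_j+|z-y|$. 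Raising to the (possibly negative) fixed real power $2\lambda_j$ preserves this two-sided comparison, with constants depending only on $\lambda_j$.

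The only point that requires minor care is the case $\lambda_j<0$, but this is absorbed by the two-sided nature of the elementary bound, so no real obstacle is expected. Taking the product over $j=1,\ldots,n$ of the coordinate comparabilities, multiplying by $|z-y|^n\simeq |x-y|^n$, and combining with the linear factor $|z-y|\simeq |x-y|$ in the denominator yields the asserted equivalence.
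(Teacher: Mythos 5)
Your argument is correct and is essentially the argument the paper relies on: the paper offers no independent proof but cites \cite[Lemma 4.5]{Sz1} and remarks that the same reasoning extends to all $\lambda \in (-1\slash 2,\infty)^n$ precisely because the ball-volume formula $\mu_{\lambda}(B(w,R)) \simeq R^n \prod_{j}(w_j+R)^{2\lambda_j}$ holds in that full range, which is exactly the combination of the triangle-inequality comparison $|z-y|\simeq|x-y|$ and the coordinatewise bound $(z_j+|z-y|)\simeq(x_j+|x-y|)$ that you carry out. Your observation that the two-sided comparability survives raising to the possibly negative fixed exponent $2\lambda_j$ is the only point where the unrestricted range enters, and you handle it correctly.
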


To be precise, in \cite[Lemma 4.5]{Sz1} only restricted range of $\lambda \in [0,\infty)^n$ was allowed. However, the same arguments as those in \cite{Sz1} show the result in the general case.

Now we are in a position to prove Theorem \ref{thm:kerest}. In the proof we always tacitly assume that passing with the differentiation in $x_i$, $y_i$ and $t$ under integrals against $\1$, $dt$ or $d\nu(t)$ is legitimate. In fact, such manipulations can easily be justified by using the estimates obtained along the proof of Theorem \ref{thm:kerest} and the dominated convergence theorem.
\begin{proof}[Proof of Theorem \ref{thm:kerest}; the case of $\mathcal{W}^{\lambda}(x,y)$]
Taking into account \eqref{Bhk}, the growth bound \eqref{gr} for $\mathcal{W}^{\lambda}(x,y)$ is a straightforward consequence of Lemma \ref{lem:bridge2} (specified to $u=0$, $p=\infty$, $W=1$, $C=1 \slash 4$, $\vt = \vr =0$).

Next we focus on the smoothness conditions. For symmetry reasons it suffices to verify \eqref{sm1}. An application of the Mean Value Theorem gives
$$
|W_t^{\lambda}(x,y) - W_t^{\lambda}(x',y)|
\le
|x-x'| \Big| \nabla_{\! x} W_t^{\lambda}(x,y) \big|_{x=\t} \Big|,
$$
where $\t = \t(t,x,x',y)$ is a convex combination of $x$ and $x'$. Thus it is enough to show that
$$
\bigg\| \Big| \nabla_{\! x} W_t^{\lambda}(x,y) \big|_{x=\t} \Big| \bigg\|_{L^\infty (dt)}
\lesssim
\frac{1}{|x-y|\mu_{\lambda}(B(x,|x-y|))}, \qquad |x-y|>2|x-x'|.
$$
Differentiating \eqref{Bhk} and then using sequently Lemma \ref{lem:EST3.3} (with $W=-n\slash 2 - |\lambda| - 2|\eps|$, $k=|r|=0$, $m=e_j$, $j=1,\ldots,n$; here $e_j$ is the $j$th coordinate vector in $\mathbb{R}^n$), the inequalities
\begin{equation} \label{est1}
\t \le x \vee x' , \qquad  |x - \t| \le |x-x'| , \qquad
|x - x \vee x'| \le |x - x'|,
\end{equation}
and then Lemma \ref{lem:theta} twice (first with $z = \t$ and then with $z = x \vee x'$) we obtain
\begin{align*}
\Big| \nabla_{\! x} W_t^{\lambda}(x,y) \big|_{x=\t}  \Big|
    & \lesssim
\sum_{\eps \in \{ 0,1 \}^n} \sum_{\beta, \gamma \in \{ 0,1,2 \}^n}
      \t^{2\eps-\beta\eps} y^{2\eps-\gamma\eps}
    t^{-n\slash 2 - |\lambda| -2|\eps| + |\beta\eps|\slash 2 + |\gamma\eps| \slash 2 -1 \slash 2}\\
& \qquad \times \int \exp\Big(-\frac{1}{8t} q(\t,y,s) \Big) \, \1 \\
    &\le
\sum_{\eps \in \{ 0,1 \}^n} \sum_{\beta, \gamma \in \{ 0,1,2 \}^n}
      (x \vee x')^{2\eps-\beta\eps} y^{2\eps-\gamma\eps}
    t^{-n\slash 2 - |\lambda| -2|\eps| + |\beta\eps|\slash 2 + |\gamma\eps| \slash 2 -1 \slash 2}\\
& \qquad \times \int \exp\Big(-\frac{1}{128t} q(x \vee x',y,s) \Big) \, \1 ,
\end{align*}
provided that $|x-y|>2|x-x'|$. This, along with Lemma \ref{lem:bridge2} (taken with $u=1$, $p=\infty$, $W=1$, $C=1 \slash 128$, $\vt = \beta\eps$ and $\vr = \gamma\eps$) and Lemma \ref{lem:double} (with $z= x \vee x'$), gives the desired estimate.
\end{proof}
\begin{proof}[Proof of Theorem \ref{thm:kerest}; the case of $\mathcal{G}^{\lambda}_{m,k}(x,y)$]
Combining Lemma \ref{lem:EST3.3} (applied with $W=-n\slash 2 - |\lambda| - 2|\eps|$, $|r|=0$) with Lemma \ref{lem:bridge2} (taken with $u=0$, $p=2$, $W=|m|+2k$, $C=1 \slash 8$, $\vt = \beta\eps$ and $\vr = \gamma\eps$) leads directly to the growth bound \eqref{gr}.

Proving the smoothness estimates we focus only on \eqref{sm1}. The other bound is justified by analogous arguments. In view of the Mean Value Theorem, it suffices to verify that
$$
\bigg\| \Big| \nabla_{\! x} \partial_{x}^m \partial_{t}^k W_t^{\lambda}(x,y) \big|_{x=\t} \Big| \bigg\|_{L^2 (t^{|m|+2k-1}dt)}
\lesssim
\frac{1}{|x-y|\mu_{\lambda}(B(x,|x-y|))}, \qquad |x-y|>2|x-x'|,
$$
where $\t = \t(t,x,x',y)$ is a convex combination of $x$ and $x'$. Using sequently Lemma \ref{lem:EST3.3} (specified to $W=-n\slash 2 - |\lambda| - 2|\eps|$, $|r|=0$), the inequalities \eqref{est1} and Lemma \ref{lem:theta} twice (with $z = \t$ and then with $z = x \vee x'$) we infer that
\begin{align*}
&\Big| \nabla_{\! x} \partial_{x}^m \partial_{t}^k W_t^{\lambda}(x,y) \big|_{x=\t} \Big| \\
& \quad \lesssim
\sum_{\eps \in \{ 0,1 \}^n} \sum_{\beta, \gamma \in \{ 0,1,2 \}^n}
      (x \vee x')^{2\eps-\beta\eps} y^{2\eps-\gamma\eps}
    t^{-n\slash 2 - |\lambda| -2|\eps| - k - ( |m| - |\beta\eps| - |\gamma\eps| ) \slash 2 -1 \slash 2}\\
& \quad \qquad \times \int \exp\Big(-\frac{1}{128t} q(x \vee x',y,s) \Big) \, \1 .
\end{align*}
Hence, with the aid of Lemma \ref{lem:bridge2} (applied with $u=1$, $p=2$, $W=|m|+2k$, $C=1 \slash 128$, $\vt = \beta\eps$ and $\vr = \gamma\eps$) and Lemma \ref{lem:double} (taken with $z = x \vee x'$), we arrive at the conclusion.
\end{proof}
\begin{proof}[Proof of Theorem \ref{thm:kerest}; the case of $K^{\lambda}_{\psi}(x,y)$]
The growth condition is a simple consequence of Lemma \ref{lem:EST3.3} (specified to $W=-n\slash 2 - |\lambda| - 2|\eps|$, $k=1$ and $|m|=|r|=0$), the fact that $\psi$ is bounded, and Lemma \ref{lem:bridge2} (taken with $u=0$, $p=1$, $W=1$, $C=1 \slash 8$, $\vt = \beta\eps$ and $\vr = \gamma\eps$).

We pass to proving the gradient estimate \eqref{grad}. Since $\psi \in L^{\infty}(dt)$, it is enough to check that
$$
\Big\| \big| \nabla_{\! x,y} \partial_{t} W_t^{\lambda}(x,y) \big| \Big\|_{L^1 (dt)}
\lesssim
\frac{1}{|x-y|\mu_{\lambda}(B(x,|x-y|))}, \qquad x \ne y.
$$
This, however, follows by combining Lemma \ref{lem:EST3.3} (specified to $W=-n\slash 2 - |\lambda| - 2|\eps|$, $k=1$ and $m=e_j$, $|r|=0$ or $|m|=0$, $r=e_j$, $j=1, \dots, n$)
 with Lemma \ref{lem:bridge2} (applied with $u=1$, $p=1$, $W=1$, $C=1 \slash 8$, $\vt = \beta\eps$ and $\vr = \gamma\eps$).
\end{proof}
\begin{proof}[Proof of Theorem \ref{thm:kerest}; the case of $K^{\lambda}_{\nu}(x,y)$]
Since the measure $\nu$ is complex (in particular, its total variation is finite), in order to prove the standard estimates it suffices to verify that
\begin{align*}
\big\| W_t^{\lambda}(x,y) \big\|_{L^{\infty}(dt)}
& \lesssim
\frac{1}{\mu_{\lambda}(B(x,|x-y|))}, \qquad x \ne y, \\
\Big\| \big| \nabla_{\!x,y} W_t^{\lambda}(x,y) \big| \Big\|_{L^{\infty}(dt)}
& \lesssim
\frac{1}{|x-y|\mu_{\lambda}(B(x,|x-y|))}, \qquad x \ne y.
\end{align*}
The first bound here is just the growth condition for $\mathcal{W}^{\lambda}(x,y)$, which is already justified. The second one is implicitly contained in the proof of the smoothness estimates for $\mathcal{W}^{\lambda}(x,y)$.
\end{proof}
\begin{proof}[Proof of Theorem \ref{thm:kerest}; the case of $R_m^{\lambda}(x,y)$]
The growth condition is obtained by using Lemma \ref{lem:EST3.3} (specified to $W=-n\slash 2 - |\lambda| - 2|\eps|$, $k = |r| = 0$) and then Lemma \ref{lem:bridge2} (with $u=0$, $p=1$, $W=|m|\slash 2$, $C=1 \slash 8$, $\vt = \beta\eps$ and $\vr = \gamma\eps$).

To prove the gradient bound \eqref{grad}, it suffices to show that
$$
\Big\| \big| \nabla_{\!x,y} \partial_x^m W_t^{\lambda}(x,y) \big| \Big\|_{L^{1}(t^{|m|\slash 2 -1}dt)}
\lesssim
\frac{1}{|x-y|\mu_{\lambda}(B(x,|x-y|))}, \qquad x \ne y.
$$
This, however, follows by using Lemma \ref{lem:EST3.3} (taken with $W=-n\slash 2 - |\lambda| - 2|\eps|$, $k = |r| = 0$) and then Lemma \ref{lem:bridge2} (applied with $u=1$, $p=1$, $W=|m|\slash 2$, $C=1 \slash 8$, $\vt = \beta\eps$ and $\vr = \gamma\eps$).
\end{proof}

\end{document}